\newtheorem{thm}{Theorem}
\newtheorem{prop}[thm]{Proposition}
\newtheorem{lem}[thm]{Lemma}
\newtheorem{rem}[thm]{Remark}
\newtheorem{exmp}[thm]{Example}
\DeclareMathOperator{\sr}{sr}
\DeclareMathOperator{\M}{M}
\title[Kaplansky's condition and almost stable range $1$]{The Kaplansky condition and rings of almost stable range $1$}
\author[Moshe Roitman]{
Moshe Roitman\\
Department of Mathematics\\
University of Haifa\\
Mount Carmel, Haifa 31905, Israel}
\thanks{Part of this work was done while visiting the New Mexico State University. I thank Bruce Olberding from this university for useful discussions and suggestions concerning this topic}
\subjclass[2010]{Primary 13F99}
\keywords{almost stable range $1$, elementary divisor ring, Kaplansky condition, K-Hermite, stable range}
\begin{document}
\begin{abstract}
We  present some variants of the Kaplansky condition for a K-Hermite ring $R$ to be an elementary divisor ring; for example, a commutative K-Hermite ring $R$ is an EDR iff for any elements $x,y,z\in R$ such that  $(x,y)=(1)$, there exists an element $\lambda\in R$ such that $x+\lambda y=uv$, where
$(u,z)=(v,1-z)=(1)$.

We present an example of a a B\'ezout domain that is an elementary divisor ring, but it does not have almost stable range $1$, thus answering a question of
Warren Wm. McGovern.
\end{abstract}
\maketitle

\section{Introduction}
First we recall some basic  definitions and known results.

All rings here are commutative with  unity. A ring $R$ is {\em B\'ezout} if each finitely generated ideal of $R$ is principal.

Two rectangular matrices $A$ and $B$ in $\M_{m,n}(R)$ are {\em equivalent} if there exist invertible matrices $P\in\M_{m,m}(R)$ and $Q\in\M_{n,n}(R)$ such that $B=PAQ$.

The ring $R$ is {\em K-Hermite} if  every rectangular matrix $A$  over $R$ is equivalent to an upper or a lower triangular matrix (following \cite[Appendix to \S 4]{Lam-Serrprobprojmodu:06} we use the term `K-Hermite' rather then `Hermite' as in \cite{Kaplansky-Elemdivimodu:49}).
From \cite{Kaplansky-Elemdivimodu:49} it follows that this definition is equivalent to the definition given there. See also
\cite[Theorem 3]{Henriksen-Someremaelemdivi:55}: by this theorem, a ring is K-Hermite iff for every two comaximal elements $a,b\in R$, there are two comaximal elements $a_1,b_1\in R$ such that $(a,b)=(a_1a+b_1b)$ in $R$. Parentheses $()$ are used to denote the ideal generated by the specified elements.

A ring $R$ is an {\em elementary divisor  ring (EDR)} iff  every rectangular matrix $A$  over $R$ is equivalent to a diagonal matrix.
It follows from \cite{Kaplansky-Elemdivimodu:49} that this definition is equivalent to the definition given there.

An EDR is K-Hermite and a K-Hermite ring is B\'ezout. An integral domain is B\'ezout iff it is K-Hermite.

By \cite[Theorem 6]{Gillman+Henriksen-Someremaabouelem:56} a ring $R$ is an EDR iff it satisfies the following two conditions:

\begin{enumerate}
\item
$R$ is K-Hermite;

\item
$R$ satisfies Kaplansky's condition (see \S2, condition (K) below).
\end{enumerate}

By \cite[Example 4.11]{Gillman+Henriksen-Ringcontfuncwhic:56}, $(1)\ \not\hspace{-.3cm}\implies (2)$. The question in
\cite{Henriksen-Someremaelemdivi:55} whether a B\'ezout domain is an EDR (equivalently, whether it  satisfies Kaplansky's condition), is still open.
On the other hand, it is immediate that $(2)\hspace{.4cm} \not\hspace{-.3cm}\implies (1)$ (Remark \ref{kaplsr1}  below).

In section \ref{2}, we elaborate on the Kaplansky condition.

A row $[r_1,\dots,r_n]$ over a ring $R$ is {\em unimodular} if the elements $r_1,\dots,r_n$ generate the ideal $R$.
The {\em stable range}  $\sr(R)$ of a ring  $R$ is the least integer $n\ge1$ (if it exists) such that  for any unimodular row $[r_1, . . . , r_{n+1}]$ over $R$, there exist
$t_1, . . . , t_n\in R$ such that the row $$[r_1 + t_1r_{n+1}, . . . , r_n+t_nr_{n+1}]$$ is unimodular (see comments on \cite[Theorem 5.2, Ch. VIII]{Lam-Serrprobprojmodu:06}).
For background on stable range
see \cite[\S 3, Ch. V]{Bass-Alge:68}.

The ring $R$ has {\em almost stable range $1$} if every proper homomorphic image of $R$ has stable range $1$ (see \cite{McGovern-Bezoringwithalmo:08}). By \cite[Theorem 3.7]{McGovern-Bezoringwithalmo:08} a B\'ezout ring with almost stable range $1$ is an EDR.
We elaborate on the almost stable range $1$ condition in \S 3.
In particular, we present an elementary divisor domain (so B\'ezout) that does not have almost stable range $1$, thus answering  the question of Warren Wm. McGovern in \cite{McGovern-Bezoringwithalmo:08} (Example \ref{warrenquest}).
  By Remark  \ref{almoststable} below, a ring of stable range $1$ is of almost stable range $1$. On the other hand, $\mathbb Z$ is of almost stable range $1$, but is not of stable range $1$ (the stable range of  $\mathbb Z$ is $2$. More generally, the stable range of any K-Hermite ring is $\le 2$ \cite[Proposition 8]{Menal+Moncasi-reguringwithstab:82}. Also by \cite[Theorem 1]{-RedumatroverBezo:03}, a B\'ezout ring is K-Hermite iff is of stable range $\le2$).

For general background see \cite{Kaplansky-Elemdivimodu:49},  \cite[\S 6, Ch. 3 ]{Fuchs+Salce-Moduovernon-doma:01} and \cite{McGovern-Bezoringwithalmo:08}.

\section{On the Kaplansky condition}\label{2}
By \cite{Gillman+Henriksen-Someremaabouelem:56}  a K-Hermite ring $R$ is an elementary divisor ring
iff it satisfies Kaplansky's condition (see \cite[Theorem 5.2]{Kaplansky-Elemdivimodu:49}):
{\em \begin{align*}\tag{K}
&\text{For any three elements $a,b,c$ in $R$ that generate the
ideal $R,$}\\
&\text{there exist elements $p,q\in R$ so that $(pa,pb+qc)=(1)$ in $R$.}
\end{align*}}

\begin{rem}\label{kaplsr1}
If $R$ is a ring of stable range $1$, then $R$ satisfies Kaplansky's condition with $p=1$. Thus, if $R$ is a ring satisfying Kaplansky's condition,
then $R$ is not necessarily K-Hermite, even if $R$ is a Noetherian local domain.
\end{rem}
Indeed, a local ring $R$ is of stable range $1$. If $R$ is also a Noetherian domain, then $R$ is K-Hermite iff $R$ is a principal ideal ring.
Hence a Noetherian local domain that is not a principal ideal ring is of stable range $1$, but it does not satisfy Kaplansky's condition.
\qed

In the proof of Lemma \ref{triangle} below, we will use the following well-known fact:
\begin{rem}\label{rows}
Let $R$ be a ring, let
$A$ be a matrix in $\M_{m,n}(R)$,  let $\mathbf r$ be a row in
in $\M_{1,n}(R)$, and let $1\le k\le n$. Then $\mathbf r$ belongs to the submodule of $R^n$ generated by the rows of the matrix $A$ iff
there exists a matrix $C\in \M_{k,m}(R)$ such that $\mathbf r$ is the first row of the matrix $CA$.
\end{rem}

\begin{lem}\label{triangle}
Let $A$   be a $2\times2$-matrix over a  ring $R$, and let $\mathbf
u$ be a unimodular row of length $2$ over $R$.

Then\\
 $\mathbf u$ belongs to the submodule of $R^2$ generated by the
rows of $A$ $\iff$ there exists an invertible matrix $P$ so that
$\mathbf u$ is the first row of  $PA$.
\end{lem}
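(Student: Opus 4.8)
The plan is to dispose of the backward implication by a one-line observation and to prove the forward implication by showing that the row of coefficients expressing $\mathbf u$ in terms of the rows of $A$ is automatically unimodular, hence can be completed to an invertible matrix.

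For $(\Leftarrow)$ I would simply note that the first row of $PA$ equals $[p_1,p_2]A$, where $[p_1,p_2]$ is the first row of $P$, and that this is the $R$-linear combination $p_1\mathbf a_1+p_2\mathbf a_2$ of the rows $\mathbf a_1,\mathbf a_2$ of $A$; no hypothesis on $P$ or on $\mathbf u$ is needed here.

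For $(\Rightarrow)$ I would write $\mathbf u=c_1\mathbf a_1+c_2\mathbf a_2$, that is, $\mathbf u=\mathbf cA$ with $\mathbf c=[c_1,c_2]$ (this is the case $k=m=n=2$ of Remark \ref{rows}, or just the definition of the submodule generated by the rows of $A$). The crux is the observation that $(u_1,u_2)\subseteq(c_1,c_2)$, since every entry of $\mathbf cA$ is an $R$-linear combination of $c_1$ and $c_2$; as $\mathbf u$ is unimodular this forces $(c_1,c_2)=R$, so $\mathbf c$ itself is unimodular. Then I would complete $\mathbf c$ to an invertible matrix over $R$ in the usual way: pick $s_1,s_2\in R$ with $c_1s_1+c_2s_2=1$ and set $P=\begin{pmatrix}c_1&c_2\\-s_2&s_1\end{pmatrix}$, which has determinant $1$ and hence lies in $\mathrm{GL}_2(R)$; since its first row is $\mathbf c$, the first row of $PA$ is $\mathbf cA=\mathbf u$, as required.

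The only step with any real content is the passage from unimodularity of $\mathbf u$ to unimodularity of the coefficient row $\mathbf c$; everything else — completing a unimodular length-$2$ row to a determinant-one matrix, and the backward direction — is routine, so I do not anticipate a genuine obstacle in carrying this out.
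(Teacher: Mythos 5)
Your proposal is correct and follows essentially the same route as the paper: express $\mathbf u=\mathbf cA$, deduce that $\mathbf c$ is unimodular from the unimodularity of $\mathbf u$, and complete the length-$2$ unimodular row $\mathbf c$ to an invertible matrix. You merely spell out the two steps the paper leaves implicit (the containment $(u_1,u_2)\subseteq(c_1,c_2)$ and the explicit determinant-one completion), so there is no substantive difference.
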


\begin{proof}
\

$\implies$:

By Remark \ref{rows}, there exists a $2$-row $\mathbf r$ over $R$ so that
$\mathbf u=\mathbf r A$. Since the row $\mathbf u$ is unimodular, the row $\mathbf r$ is also unimodular.
 Since $\mathbf r$ is unimodular of length $2$,
there exists an invertible matrix $P$ with first row equal to
$\mathbf r$. Thus $\mathbf u$ is the first row of the matrix $PA$.

$\Longleftarrow$:

This follows from Remark \ref{rows}.
\end{proof}

\begin{lem}\label{general}
Let $A$ be a $2\times 2$ matrix over a ring $R$ so that its entries
generate the ideal $R$. Then\\
 $A$ is equivalent to a diagonal matrix
$\iff$\\
 the submodule of $R^2$ generated by the rows of $A$ contains
a unimodular row.

In this case $A$ is equivalent to a matrix of the form
$\bigl(\begin{smallmatrix}
1& 0\\
0&*\end{smallmatrix}\bigr)$.
\end{lem}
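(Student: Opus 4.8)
The plan is to prove both implications, with the forward direction being essentially trivial and the reverse direction carrying all the weight; the final ``in this case'' assertion will come out of the reverse construction.

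For the direction ($\Longleftarrow$), suppose the row module of $A$ contains a unimodular row $\mathbf u$. By Lemma \ref{triangle}, there is an invertible matrix $P$ so that $\mathbf u$ is the first row of $PA$; write
$$
PA=\begin{pmatrix} u_1 & u_2\\ b_1 & b_2\end{pmatrix},
$$
where $(u_1,u_2)=(1)$. Since $[u_1,u_2]$ is a unimodular row of length $2$, there is an invertible matrix $Q$ with first column $\bigl(\begin{smallmatrix} u_1\\ u_2\end{smallmatrix}\bigr)$ — indeed, pick $s,t$ with $su_1+tu_2=1$ and take $Q=\bigl(\begin{smallmatrix} u_1 & -t\\ u_2 & s\end{smallmatrix}\bigr)$, which has determinant $1$. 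But I want the unimodular row in the $(1,1)$ position, so instead I right-multiply $PA$ by $Q$ chosen so that $PAQ$ has first row $[1,0]$: writing $Q=\bigl(\begin{smallmatrix} s & -u_2\\ t & u_1\end{smallmatrix}\bigr)$ gives first row $[su_1+tu_2,\ -u_1u_2+u_1u_2]=[1,0]$. Then a single elementary column operation and a single elementary row operation clear the $(2,1)$ entry of $PAQ$, since its first row is $[1,0]$: subtract the appropriate multiple of the first row from the second to kill the $(2,1)$ entry, which does not disturb the first row. The result is a matrix $\bigl(\begin{smallmatrix} 1 & 0\\ 0 & d\end{smallmatrix}\bigr)$ for some $d\in R$, and all the operations performed are achieved by multiplication by invertible matrices. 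This simultaneously proves that $A$ is equivalent to a diagonal matrix and that it has the stated special form.

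For the direction ($\implies$), suppose $A=PDQ$ with $P,Q$ invertible and $D=\bigl(\begin{smallmatrix} d_1 & 0\\ 0 & d_2\end{smallmatrix}\bigr)$ diagonal. The hypothesis that the entries of $A$ generate $R$ is equivalent to the entries of $D$ generating $R$ (the ideal generated by all entries is invariant under equivalence), so $(d_1,d_2)=(1)$; in particular the row $[d_1,d_2]$ lies in the row module of $D$ and is unimodular. Pulling back: the row module of $A$ equals the row module of $PD$ (since $Q$ is invertible, right multiplication is a module isomorphism of $R^2$ carrying the row module of $PDQ$ to... ) — here I must be slightly careful. The row module of $PDQ$ as a submodule of $R^2$ is the image under right-multiplication-by-$Q$ of the row module of $PD$; since $[d_1,d_2]$ is a unimodular row in the row module of $D$, and $P$ invertible means row module of $PD$ $=$ row module of $D$, the vector $[d_1,d_2]Q$ is a unimodular row (unimodularity is preserved by right multiplication by an invertible matrix) lying in the row module of $A$. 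Hence the row module of $A$ contains a unimodular row, as required.

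I expect the main obstacle to be bookkeeping the distinction between row operations (left multiplication), column operations (right multiplication), and their effect on the submodule of $R^2$ generated by the rows: left multiplication by an invertible matrix preserves this submodule, while right multiplication transforms it by an automorphism of $R^2$ that nonetheless preserves the property of containing a unimodular row. Once that is kept straight, both directions are short; the only genuinely constructive content is the explicit choice of $Q$ (equivalently, of a Bézout-type identity $su_1+tu_2=1$) in the reverse direction, which is exactly where Lemma \ref{triangle} and unimodularity of length-$2$ rows are used.
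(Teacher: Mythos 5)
Your proof is correct and follows essentially the same route as the paper's: Lemma \ref{triangle} combined with completing a unimodular length-$2$ row to an invertible matrix, followed by one elementary operation to clean up. The only cosmetic difference is that in the reverse direction the paper invokes Lemma \ref{triangle} a second time (for columns) to reach a matrix $\bigl(\begin{smallmatrix}1&r\\ *&*\end{smallmatrix}\bigr)$ before clearing, whereas you write the completing matrix $Q$ explicitly; in the forward direction you are in fact slightly more careful than the paper about why the unimodular row $[d_1,d_2]$ transports, via right multiplication by $Q$, into the row module of $A$ itself.
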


\begin{proof}
\

$\implies$

By assumption $A$ is equivalent to a diagonal matrix
$D=\bigl(\begin{smallmatrix}
d_1& 0\\
0&d_2\end{smallmatrix}\bigr)$, where $d_1,d_2\in R$. Thus $d_1,d_2$
generate the ideal $R$. The sum of the rows of $D$, namely,
$[d_1,d_2]$, is unimodular.

$\Longleftarrow$

By Lemma \ref{triangle}, the matrix $A$ is equivalent over $R$ to a
matrix $B$ with first row unimodular. Hence the submodule generated
by the columns of $B$ contains a column of the form
$\left(\begin{matrix} 1\\ *\end{matrix}\right)$. By Lemma
\ref{triangle} again (for columns), we obtain that $A$ is equivalent
to a matrix $\bigl(\begin{smallmatrix} 1& r\\
*&*\end{smallmatrix}\bigr)$. By subtracting the first column of the
matrix $\bigl(\begin{smallmatrix} 1& r\\
*&*\end{smallmatrix}\bigr)$  multiplied by $r$ from its second column and by a
similar elementary row transformation,  we obtain a diagonal matrix
of the form $\bigl(\begin{smallmatrix} 1&0\\
0&*\end{smallmatrix}\bigr)$. \end{proof}

\begin{thm}(see \cite[Theorem 5.2]{Kaplansky-Elemdivimodu:49} and
\cite[Corollary
5]{Gillman+Henriksen-Someremaabouelem:56}.)\label{kaplanskyc}

 Let $R$ be a  ring. Let
$A=\bigl(\begin{smallmatrix}
a& b\\
0&c\end{smallmatrix}\bigr)$ a triangular $2\times2$-matrix over $R$
so that $(a,b,c)=(1)$. Then $A$ is equivalent to a diagonal matrix
over $R$ iff there exist elements $p,q$ in $R$ so $(pa,pb+qc)=(1)$.
\end{thm}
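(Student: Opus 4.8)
The plan is to read the statement off Lemma~\ref{general}. First I would check that Lemma~\ref{general} applies to $A=\bigl(\begin{smallmatrix} a& b\\ 0&c\end{smallmatrix}\bigr)$: its entries are $a,b,0,c$, and $(a,b,0,c)=(a,b,c)=(1)$, so indeed the entries of $A$ generate the ideal $R$. Hence, by Lemma~\ref{general}, $A$ is equivalent to a diagonal matrix over $R$ if and only if the submodule $M\subseteq R^2$ generated by the two rows $[a,b]$ and $[0,c]$ of $A$ contains a unimodular row.

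The remaining step is the elementary identification of $M$. Every element of $M$ has the form $p[a,b]+q[0,c]=[pa,\,pb+qc]$ for suitable $p,q\in R$, and conversely each such row lies in $M$. Thus $M$ contains a unimodular row precisely when there exist $p,q\in R$ for which $[pa,pb+qc]$ is unimodular, i.e.\ for which $(pa,pb+qc)=(1)$. Combining this with the previous paragraph yields the asserted equivalence in both directions.

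I do not expect any real obstacle: the substance of the theorem is already packaged in Lemma~\ref{general} (which itself rests on Lemma~\ref{triangle} and Remark~\ref{rows}), and what is left is only the specialization to the upper‑triangular shape together with the above description of the row submodule. If one preferred a self‑contained argument, the ``only if'' direction would unwind the proof of Lemma~\ref{general}: from an equivalence $PAQ=D$ with $D$ diagonal, say with diagonal entries $d_1,d_2$, one gets $(d_1,d_2)=(1)$, so the sum of the rows of $D$ is a unimodular row; transporting it through the invertible matrices (left‑multiplying by $P^{-1}$ preserves the row submodule, right‑multiplying by $Q^{-1}$ preserves unimodularity) produces a unimodular row in $M$, necessarily of the form $[pa,pb+qc]$. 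The ``if'' direction would instead invoke Lemma~\ref{triangle} to move a unimodular row $[pa,pb+qc]\in M$ to the first row of a matrix equivalent to $A$, and then clear that row and the corresponding column by the same elementary operations used at the end of the proof of Lemma~\ref{general}. Either way the argument is short; citing Lemma~\ref{general} is the cleanest route.
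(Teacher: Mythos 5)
Your proposal is correct and matches the paper's own proof, which is exactly the one-line observation that $p[a,b]+q[0,c]=[pa,pb+qc]$ combined with Lemma~\ref{general}; you have simply spelled out the same argument in more detail.
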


\begin{proof}

Since $p[a,b]+q[0,c]=[pa,pb+qc]$ for any elements $p,q\in R$, the theorem follows from
Lemma \ref{general}.
\end{proof}

\begin{rem}\label{kaplanskyreformulated}
Let $R$ be any ring. If Kaplansky's condition $(pb+qc,pa)=(1)$ holds for elements $a,b,c,p,q\in R$, then
$$(pb+qc,a)=(p,c)=(1).$$
\end{rem}
Indeed, Kaplansky's condition implies that
$$(pb+qc,a)=(pb+qc,p)=(1),$$
 so $(p,c)=(1)$.
Cf. the next proposition.\qed

\begin{prop}\label{variantKaplansky}
Let $R$ be a K-Hermite ring, and let $a,b$ and $c$ be elements of
$R$ that generate the ideal $R$. Then the following four conditions are equivalent:
\begin{enumerate}
\item
The matrix $\bigl(\begin{smallmatrix}
a&b\\0&c\end{smallmatrix}\bigr)$ is equivalent to a diagonal matrix;

\item
There exist elements $p,q$ in $R$ so $(pa,pb+qc)=(1)$;

\item
There exist elements $p$ and $q$ in $R$ so that
$(pb+qc,a)=(p,c)=(1)$;

\item
For some elements $\lambda, u,v\in R$ we have $b+\lambda c=uv$, and
$(u,a)=(v,c)=(1)$.
\end{enumerate}
Moreover, in (4) we may choose the elements $u$ and $v$ such that $(u,v)=(1)$.
\end{prop}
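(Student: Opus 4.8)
The plan is to run the cycle $(1)\Leftrightarrow(2)$, $(2)\Rightarrow(3)\Rightarrow(4)\Rightarrow(2)$, and then prove the closing clause. Here $(1)\Leftrightarrow(2)$ is exactly Theorem~\ref{kaplanskyc} and uses nothing about $R$, while $(2)\Rightarrow(3)$ is Remark~\ref{kaplanskyreformulated}: if $(pa,pb+qc)=(1)$ then the same $p,q$ already give $(pb+qc,a)=(p,c)=(1)$. So the content lies in $(3)\Rightarrow(4)$ (elementary) and $(4)\Rightarrow(2)$, and K-Hermiteness will be used only in the latter.

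For $(3)\Rightarrow(4)$: given $p,q$ with $(pb+qc,a)=(p,c)=(1)$, pick $p^{*},c^{*}$ with $pp^{*}+cc^{*}=1$, so $pp^{*}\equiv1\pmod c$. Put $u:=pb+qc$ and $v:=p^{*}$. Then $uv=p^{*}(pb+qc)\equiv b\pmod c$, hence $uv=b+\lambda c$ for a suitable $\lambda\in R$; moreover $(u,a)=(pb+qc,a)=(1)$ by hypothesis, and $(v,c)=(p^{*},c)=(1)$ because $1=pp^{*}+cc^{*}\in(p^{*},c)$. This is condition $(4)$.

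The heart of the argument is $(4)\Rightarrow(2)$, where we must produce $p,q$ with $(pa,pb+qc)=(1)$. From $(v,c)=(1)$ fix a relation $v^{*}v+c^{*}c=1$; then $(v^{*},c)=(1)$. The decisive observation is that the length-$3$ row $[\,v^{*}a,\ u,\ ac\,]$ is unimodular: since $(v^{*},c)=(1)$ we have $(v^{*}a,ac)=a(v^{*},c)=(a)$, so $(v^{*}a,u,ac)=(a,u)=(1)$. Now a K-Hermite ring has stable range $\le2$ (the only place this is used), so the third coordinate $ac$ can be absorbed into the first two: there exist $t_{1},t_{2}\in R$ with $(\,v^{*}a+t_{1}ac,\ u+t_{2}ac\,)=(1)$. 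Set $p:=v^{*}+t_{1}c$; then $pa=v^{*}a+t_{1}ac$ and $(p,c)=(v^{*},c)=(1)$ (adding a multiple of $c$ does no harm). A short computation from $v^{*}v+c^{*}c=1$ and $uv=b+\lambda c$ gives $v^{*}b=u-c(uc^{*}+\lambda v^{*})$, so $pb=v^{*}b+t_{1}cb$ differs from $u$ by a multiple of $c$; choosing $q$ so that this multiple becomes $t_{2}a$ yields $pb+qc=u+t_{2}ac$. Hence $(pa,pb+qc)=(\,v^{*}a+t_{1}ac,\ u+t_{2}ac\,)=(1)$, i.e.\ $(2)$. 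The obstacle to expect is that the naive routes reduce $(4)\Rightarrow(1)$ to collapsing a unimodular pair to a unit inside a proper quotient $R/(a)$ or $R/(c)$, i.e.\ to stable range $1$ there, which is unavailable; reducing the row $[v^{*}a,u,ac]$ — with third entry $ac$ rather than $c$ or $a$ — is exactly what makes the stable-range-$2$ corrections simultaneously a multiple of $a$ (so one becomes $pa$, keeping $(p,c)=(1)$) and of $c$ (so the other is swept into $q$).

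Finally, the closing clause. From $(pa,pb+qc)=(1)$ one also gets $(p,q)=(1)$, since $(1)=(pa,pb+qc)\subseteq(p,pb+qc)=(p,qc)$ forces $(p,c)=(p,q)=(1)$, whence $(p,pb+qc)=(1)$. Rerun the $(3)\Rightarrow(4)$ construction with $u':=pb+qc$, but choose the quasi-inverse of $p$ mod $c$ so as to be comaximal with $pb+qc$ as well: if $p\pi\equiv1\pmod{pb+qc}$ and $pp^{*}+cc^{*}=1$, put $v':=p^{*}+\pi c^{*}c$. Then $pv'\equiv1\pmod c$ and $(v',c)=(1)$ as before, while in $R/(pb+qc)$ one computes $\overline{v'}=\bar\pi$, a unit, so $(u',v')=(1)$. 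Taking $\lambda'$ with $u'v'=b+\lambda'c$, conditions $(4)$ hold for $u',v',\lambda'$ and in addition $(u',v')=(1)$.
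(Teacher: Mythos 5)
Your proof is correct, but the nontrivial leg is organized quite differently from the paper's. The paper runs the cycle $(2)\Rightarrow(4)\Rightarrow(3)\Rightarrow(2)$ and confines the K-Hermite hypothesis to $(3)\Rightarrow(2)$, where it enters through the comaximal-factorization property: write $(d)=(p,q)$ with $d=p_1p+q_1q$ and $(p_1,q_1)=(1)$, and check that $p_1,q_1$ already witness (2). You instead run $(2)\Rightarrow(3)\Rightarrow(4)\Rightarrow(2)$ and use K-Hermiteness only via the cited fact that such rings have stable range $\le 2$, applied to the unimodular row $[v^{*}a,\,u,\,ac]$; the choice of $ac$ as the entry to be absorbed, so that the two corrections are simultaneously multiples of $a$ (giving $pa$ with $p=v^{*}+t_1c$) and of $c$ (swept into $q$), is a genuinely different mechanism, and your algebra ($v^{*}b=u-c(uc^{*}+\lambda v^{*})$, hence $q:=t_2a-t_1b+uc^{*}+\lambda v^{*}$) checks out. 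The paper's route is shorter and self-contained relative to Henriksen's characterization of K-Hermite rings; yours imports the external input $\sr(R)\le 2$ (Menal--Moncasi, quoted in the introduction) but makes the stable-range content of the implication explicit. For the final clause, the paper gets $(u,v)=(1)$ for free inside $(2)\Rightarrow(4)$ by choosing $v$ inverse to $p$ modulo $(pb+qc)c$, whereas your two-step correction $v'=p^{*}+\pi c^{*}c$ achieves the same thing slightly more laboriously; both are valid.
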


\begin{proof}
\

$(1)\iff (2)$:

This follows from Theorem \ref{kaplanskyc}.

$(2)\implies (4)$:

Since $(pa, pb+qc)=(1)$ we obtain $(1)=(p,pb+qc)=(p,qc)$, so $(p,(pb+qc)c)=(1)$.  Let $v$ be an element of $R$ so that
$$vp\equiv 1\pmod {(pb+qc)c},$$ thus $vp\equiv 1\pmod c$. We  have
$v(pb+qc)\equiv b\pmod c$, so $v(pb+qc)=b+\lambda c$ for some element $\lambda\in R$. Hence
$b+\lambda c=uv$, where $u=pb+qc$, thus $(u,a)=(v,c)=(u,v)=(1)$.

$(4)\implies (3)$:

We have $b\equiv uv\pmod c$. Let $p\in R$ so that $pv\equiv 1\pmod c$. Hence $pb\equiv u\pmod c$, so there exists an element  $q\in R$ such that
$pb+qc=u$. Thus (3) holds.

$(3)\implies (2)$:

Since $R$ is a K-Hermite ring, we may write $(d)=(p,q)$ and $d=p_1p+q_1q$ with $(p_1,q_1)=(1)$. Hence $$(p_1,p_1b+q_1c)=(p_1,q_1c)=(1),$$ so
$(p_1a,p_1b+q_1c)=(p_1,c)=(1)$.  Condition (2) holds with $p$ and $q$ replaced by $p_1$ and $q_1$, respectively.
\end{proof}

In the proof of Proposition \ref{variantKaplansky}, we have used the assumption that $R$ is K-Hermite just for the implication
$(3)\implies (2)$.

\begin{rem}\label{bezoutcrit}
If $R$ is a B\'ezout domain, then the following condition is equivalent to the conditions of Proposition \ref{variantKaplansky}:

$(*)$ For some elements $\lambda, a_1,c_1\in R$ we have
$$b+\lambda c\mid (1-a_1a)(1-c_1c).$$
\end{rem}
Indeed, assume condition $(*)$. Let $u\in R$ so that
$$(u)=(b+\lambda c, 1-a_1a),$$
 thus $(u,a)=(1)$ and
$\frac {b+\lambda c} u~\mid~ \left(\frac {1-a_1a}u\right) (1-~c_1c)$. Since
$(\frac {b+\lambda c} u, \frac {1-a_1a}u)=(1)$, we see that
$v:=\frac {b+\lambda c}u$ divides $1-c_1c$, so $(v,c)=(1)$. Thus condition $(*)$ implies condition (4) of Proposition  \ref{variantKaplansky}.
The converse implication is obvious.\qed

Since a K-Hermite ring is an EDR iff each matrix of the form $\bigl(\begin{smallmatrix}a&b\\0&c\end{smallmatrix}\bigr)$ with $(a,b,c)=(1)$ has a diagonal reduction \cite{Kaplansky-Elemdivimodu:49}, Proposition \ref{variantKaplansky} provides necessary and sufficient conditions for a K-Hermite ring to be an EDR. We present an additional condition in the next proposition.

\begin{thm}\label{crit}
Let $R$ be a K-Hermite ring. The following two conditions are equivalent:
\begin{enumerate}
\item
$R$ is an elementary divisor  ring;
\item
For any elements $x,y,z\in R$ such that  $(x,y)=(1)$, there exists an element $\lambda\in R$ such that $x+\lambda y=uv$, where
$(u,z)=(v,1-z)=(1)$.
\end{enumerate}
Moreover, the elements $u$ and $v$ can be chosen such that $(u,v)=(1)$.
\end{thm}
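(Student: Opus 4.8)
The plan is to derive both implications from Proposition~\ref{variantKaplansky}, together with the fact (from \cite{Kaplansky-Elemdivimodu:49}, noted just before the theorem) that a K-Hermite ring is an EDR exactly when every matrix $\bigl(\begin{smallmatrix}a&b\\0&c\end{smallmatrix}\bigr)$ with $(a,b,c)=(1)$ is diagonalizable, i.e. exactly when condition (4) of Proposition~\ref{variantKaplansky} holds for every such triple.

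For $(1)\implies(2)$: given $x,y,z$ with $(x,y)=(1)$, I would apply Proposition~\ref{variantKaplansky} to the triple $a:=z$, $b:=x$, $c:=(1-z)y$. Since $(1-z)y\equiv y\pmod z$ one has $(z,x,(1-z)y)=(z,x,y)=(1)$, so the proposition applies; as $R$ is an EDR its condition (1), hence (4), holds, yielding $\mu,u,v$ with $x+\mu(1-z)y=uv$ and $(u,z)=(v,(1-z)y)=(u,v)=(1)$. Putting $\lambda:=\mu(1-z)$ gives $x+\lambda y=uv$ with $(u,z)=(v,1-z)=(u,v)=(1)$, which is (2) together with the final ``moreover'' clause.

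For $(2)\implies(1)$: fix $a,b,c$ with $(a,b,c)=(1)$; by the above it suffices to find $\lambda,u,v$ with $b+\lambda c=uv$ and $(u,a)=(v,c)=(1)$. First, since $R$ is K-Hermite, write $(b,c)=(g)$ with $b=gb'$, $c=gc'$ and $(b',c')=(1)$ (a standard consequence of the K-Hermite property, obtained by triangularising the $1\times2$ matrix $[b,c]$). Then $(a,g)=(a,b,c)=(1)$. The crucial move is to exploit this comaximality of $a$ with $g$ — rather than with $c$, which need not hold — in order to build the auxiliary element of~(2): by the Chinese remainder theorem choose $z$ with $z\equiv0\pmod a$ and $z\equiv1\pmod g$ (concretely $z=sa$, where $sa+tg=1$). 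Now apply~(2) to the comaximal pair $(b',c')$ and this $z$, obtaining $\lambda,u_0,v_0$ with $b'+\lambda c'=u_0v_0$ and $(u_0,z)=(v_0,1-z)=(1)$. From $a\mid z$ we get $(u_0,a)=(1)$; from $g\mid1-z$ we get $(v_0,g)=(1)$; and since $v_0$ divides $u_0v_0=b'+\lambda c'\equiv b'\pmod{c'}$ with $(b',c')=(1)$, also $(v_0,c')=(1)$. Taking $u:=gu_0$ and $v:=v_0$ then gives $b+\lambda c=g(b'+\lambda c')=uv$ with $(u,a)=(1)$ (both $g$ and $u_0$ are coprime to $a$) and $(v,c)=(v_0,gc')=(1)$ ($v_0$ being coprime to both $g$ and $c'$). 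This verifies condition (4) of Proposition~\ref{variantKaplansky} for an arbitrary comaximal triple, so $R$ is an EDR.

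The one step I expect to require real insight — the rest being routine manipulation of comaximal elements — is the choice of $z$ in $(2)\implies(1)$: one has to notice that although $(a,c)$ may fail to be the unit ideal, $a$ is comaximal with $g=\gcd(b,c)$, so it is $g$, not $c$, that should be paired with $a$ in the Chinese-remainder step; after factoring $g$ out of $c$, the residual factor $c'$ is automatically coprime to the output $v_0$ because $v_0\mid b'+\lambda c'$.
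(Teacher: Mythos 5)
Your proof is correct and follows essentially the same route as the paper's: both directions are reduced to condition (4) of Proposition~\ref{variantKaplansky}, with the same substitution $a=z$, $b=x$, $c=(1-z)y$ for $(1)\implies(2)$, and the same factoring out of $g=\gcd(b,c)$ followed by the choice of an auxiliary element $z\equiv0\pmod a$, $z\equiv1\pmod g$ for $(2)\implies(1)$ (the paper's $z=d_1d$ with $a\mid 1-d_1d$ is your $z=sa$ up to the interchange $z\leftrightarrow 1-z$). Your write-up is in fact slightly cleaner, as the paper's version of $(2)\implies(1)$ contains two small slips (the roles of $z$ and $1-z$ are swapped relative to the stated application of condition (2), and $\lambda=\lambda_1 d$ should be $\lambda=\lambda_1$), neither of which affects the argument.
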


\begin{proof}
\

$(1)\implies (2)$ [including the requirement that $(u,v)=(1)$]:

We apply condition (4) of Proposition \ref{variantKaplansky} to the elements $a=z,b=x,c=y(1-z)$.

$(2)\implies (1)$:

We verify condition (4) of Proposition \ref{variantKaplansky}. Let $(a,b,c)=(1)$. Let $(d)=(b,c)$, thus $(d,a)=(b,c,a)=(1)$. Hence
$a\mid 1-d_1d$ for some element $d_1\in R$.
Also $b=b_1d,c=c_1d$, where $(b_1,c_1)=(1)$.
We apply condition (2) of the present proposition to the elements
$$x=b_1,y=c_1,z=d_1d.$$ Thus there are elements $\lambda_1,u_1,v\in R$ so that $b_1+\lambda_1 c_1=u_1v$, where $(u_1,1-d_1d)=(v,d_1d)=(1)$. Let $u=du_1$, thus $(u,a)=1$. Let $\lambda=\lambda_1d$. Hence
$b+\lambda c=d(b_1+\lambda_1 c_1)=uv$, and $(u,a)=(1)$. We have $(v,c)=(v,dc_1)=(v,c_1)$ since $(v,d)=(1)$. Since $v$ divides $b_1+\lambda_1c_1$,
it follows that $(v,c_1)\mid b_1$, so $(v,c_1)=(1)$. Thus $(v,c)=(1)$, as required.
\end{proof}

\begin{prop}\label{critdomain}
Let $R$ be a B\'ezout domain. The following two conditions are equivalent:
\begin{enumerate}
\item
$R$ is an elementary divisor  ring;
\item
For any nonzero elements $x,y,z\in R$, there exists  elements $\lambda,a,b\in R$ such that $x+\lambda y\mid y(1-az)(1-b(1-z)$ in $R$.
\end{enumerate}
\end{prop}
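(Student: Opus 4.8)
Since a B\'ezout domain is K-Hermite, Theorem~\ref{crit} already says that $R$ is an EDR if and only if condition~(2) of that theorem holds; so the plan is to show that condition~(2) of the present proposition is equivalent to condition~(2) of Theorem~\ref{crit}. The work is to match the factored form $x+\lambda y=uv$ of the latter with the divisibility form used here, and to account for both the extra factor $y$ and the fact that~(2) here concerns arbitrary nonzero $x,y$ whereas Theorem~\ref{crit} concerns coprime $x,y$.

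The first ingredient is a gcd observation valid in any B\'ezout domain: for a nonzero $e\in R$ and any $z\in R$, if $e=uv$ with $(u,z)=(v,1-z)=(u,v)=(1)$, then $u\mid 1-az$ and $v\mid 1-b(1-z)$ for suitable $a,b\in R$, and since $(u,v)=(1)$ these combine to $e\mid(1-az)(1-b(1-z))$; conversely, if $e\mid(1-az)(1-b(1-z))$, then $u:=\gcd(e,1-az)$ and $v:=e/u$ satisfy $(u,z)=(1)$ (because $u\mid 1-az$) and $(v,1-z)=(1)$ (because $v$ is coprime to $(1-az)/u$ and divides $((1-az)/u)\bigl(1-b(1-z)\bigr)$, hence divides $1-b(1-z)$). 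Using also the ``moreover'' clause of Theorem~\ref{crit}, this shows that, for a B\'ezout domain, condition~(2) of Theorem~\ref{crit} is equivalent to the following: for all $x,y$ with $(x,y)=(1)$ and all $z$ there exist $\lambda,a,b\in R$ with $x+\lambda y\mid(1-az)(1-b(1-z))$.

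For $(1)\implies(2)$: assuming $R$ is an EDR, let $x,y,z$ be nonzero, put $d=\gcd(x,y)$, and write $x=dx_1$, $y=dy_1$ with $(x_1,y_1)=(1)$. Applying the equivalence just stated to $x_1,y_1,z$ gives $\lambda,a,b$ with $x_1+\lambda y_1\mid(1-az)(1-b(1-z))$; multiplying by $d$ and using $d\mid y$ yields $x+\lambda y=d(x_1+\lambda y_1)\mid d(1-az)(1-b(1-z))\mid y(1-az)(1-b(1-z))$, which is~(2). For $(2)\implies(1)$: it suffices to verify condition~(2) of Theorem~\ref{crit}. Given $x,y$ with $(x,y)=(1)$ and any $z$, apply~(2) of the present proposition to get $\lambda,a,b$ with $x+\lambda y\mid y(1-az)(1-b(1-z))$; since $(x+\lambda y,y)=(x,y)=(1)$, the factor $y$ is coprime to $x+\lambda y$ and drops out, leaving $x+\lambda y\mid(1-az)(1-b(1-z))$, whence the gcd observation yields $u,v$ with $x+\lambda y=uv$ and $(u,z)=(v,1-z)=(1)$, as needed.

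The only real nuisance is degenerate instances --- condition~(2) here requires $x,y,z$ nonzero, whereas Theorem~\ref{crit} quantifies over all $z$ and over possibly zero coprime $x,y$, and the divisibility manipulations want nonzero divisors --- but every such case is immediate: if $z$ or $1-z$ is a unit one may take the right-hand side of the divisibility to be $0$; if $x$ or $y$ is a unit a suitable $\lambda$ makes $x+\lambda y$ a unit; and $z=0$, $x=0$, $y=0$ fall under these. So I expect the main obstacle to be organizational rather than substantive: arranging things so that Theorem~\ref{crit}, which is stated only for coprime $x,y$, is fed the coprime pair $(x_1,y_1)$ with the conclusion carried back to $(x,y)$ through $d$ --- the point being that the extra $y$ in condition~(2) is exactly what makes this round trip work, since $y$ is coprime to $x+\lambda y$ precisely when $(x,y)=(1)$ and is a multiple of $d=\gcd(x,y)$ in general.
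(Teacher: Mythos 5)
Your proof is correct and follows essentially the same route as the paper's: both directions reduce to condition (2) of Theorem \ref{crit} by factoring out $d=\gcd(x,y)$ (respectively, cancelling the factor $y$, which is coprime to $x+\lambda y$), and by translating between the factorization $x+\lambda y=uv$ with $(u,z)=(v,1-z)=(1)$ and the divisibility $x+\lambda y\mid(1-az)(1-b(1-z))$ exactly as in Remark \ref{bezoutcrit}. You merely make explicit the gcd translation and the degenerate cases that the paper leaves implicit.
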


\begin{proof}
$(1)\implies (2)$:

Let $(d)=(x,y)$, thus $\frac xd$ and $\frac yd$ are comaximal. By Theorem \ref{crit}, there are elements $\lambda,a,b\in R$ so that
$(\frac xd+\lambda \frac yd)\mid (1-az)(1-bz(1-z))$. Hence $x+\lambda y\mid d(1-az)(1-b(1-z))$, so $x+\lambda y\mid y(1-az)(1-b(1-z))$.

$(2)\implies (1)$:

Let $x_0$ and $y_0$ be comaximal elements in $R$, and let $z\in R$. Thus $(x_0+\lambda y_0)\mid y_0(1-az)(1-b(1-z))$ for some elements
$\lambda,a,b\in R$. Since the elements $x_0+\lambda y_0$ and $y_0$ are comaximal, we obtain that $(x_0+\lambda y_0)\mid (1-az)(1-b(1-z))$, so
$R$ is an EDR by Remark \ref{bezoutcrit}.
\end{proof}

\section{On rings of almost stable range $1$}\label{3}

\begin{prop}
Let $R$ be any ring. The following conditions are equivalent:
\begin{enumerate}
\item
$R$ is of almost stable range $1$;

\item
For each nonzero element $z\in R$, the ring $R/zR$ is of stable range $1$;

\item
For each three elements $x,y,z\in R$ so that $(x,y)=(1)$ and $z\ne0$, there exists an element $\lambda\in R$ so that $(x+\lambda y,z)=(1)$.
\end{enumerate}
\end{prop}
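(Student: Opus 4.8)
The plan is to prove the cycle of implications $(1)\implies(2)\implies(3)\implies(1)$. The implication $(1)\implies(2)$ needs no work: for $z\ne0$ a non-unit, $zR$ is a nonzero proper ideal, so $R/zR$ is a proper homomorphic image of $R$ and hence of stable range $1$ by hypothesis, while for $z$ a unit the ring $R/zR$ is trivial.

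For $(2)\implies(3)$ I would simply unwind the meaning of stable range $1$. Having stable range $1$ for $R/zR$ says exactly that for every pair $\bar x,\bar y\in R/zR$ with $\bar x(R/zR)+\bar y(R/zR)=R/zR$ there is $\bar\lambda$ making $\bar x+\bar\lambda\bar y$ a unit of $R/zR$. Given $x,y,z\in R$ with $(x,y)=(1)$ and $z\ne0$ we have a fortiori $(x,y,z)=(1)$, so $[\bar x,\bar y]$ is unimodular over $R/zR$; pulling back the resulting unit $\overline{x+\lambda y}$ to $R$ is precisely the assertion $(x+\lambda y,z)=(1)$. (Condition $(3)$ is in fact formally weaker than what $(2)$ just supplied, since the hypothesis $(x,y)=(1)$ is stronger than $(x,y,z)=(1)$; the content of the proposition is that this weaker condition already suffices.)

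The substantive step is $(3)\implies(1)$. Let $S=R/I$ be a proper homomorphic image, so $I\ne0$, and let $[\bar a,\bar b]$ be a unimodular row of length $2$ over $S$; I must produce $\bar\mu\in S$ with $\bar a+\bar\mu\bar b$ a unit of $S$. Lifting a unimodularity relation to $R$, choose $r,s\in R$ and $i_0\in I$ with $ar+bs+i_0=1$. The decisive trick is to absorb the kernel term into the \emph{second} coordinate: set $b':=bs+i_0$, so that $ar+b'=1$, i.e.\ $(a,b')=(1)$ in $R$, while $b'\equiv bs\pmod I$. Now choose any nonzero $z\in I$ and apply $(3)$ to the triple $(a,b',z)$: there is $\lambda\in R$ with $(a+\lambda b',z)=(1)$, so $\overline{a+\lambda b'}$ is a unit of $R/zR$. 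Since $zR\subseteq I$, the residue map $R\to S$ factors through $R/zR$, so the image of $a+\lambda b'$ in $S$ is again a unit; and modulo $I$ we have $a+\lambda b'=a+\lambda sb+\lambda i_0\equiv a+\lambda sb$. Thus $\bar a+\overline{\lambda s}\,\bar b$ is a unit of $S$, which is exactly the reduction property for length-$2$ unimodular rows, so $\sr(S)=1$.

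I expect the only real obstacle to be getting $(3)\implies(1)$ arranged correctly: one has to realize that the error term $i_0$ must be folded into $b'$ rather than into $a$, since only then does the conclusion $(a+\lambda b',z)=(1)$ descend, modulo $I$, to a relation of the shape ``$\bar a$ plus a multiple of $\bar b$ is a unit''; the rest is routine transfer of information among $R$, $R/zR$ and $S$. An alternative way to close the cycle is via $(2)\implies(1)$: given a proper homomorphic image $R/I$, pick $0\ne z\in I$, observe that $R/I$ is a quotient of $R/zR$, and use the standard fact that stable range $1$ is inherited by quotient rings; the direct argument above has the advantage of being self-contained.
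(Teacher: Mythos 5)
Your proof is correct and follows essentially the same route as the paper: the implications $(1)\implies(2)\implies(3)$ by unwinding definitions, and for $(3)\implies(1)$ the same key trick of absorbing the kernel term into the second coordinate (your $b'=bs+i_0=1-ar$ is exactly the paper's $1-rx$, applied to the comaximal pair $(x,\,1-rx)$). Nothing to add.
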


\begin{proof} Cf. \cite[Proposition 3.2, Ch. V]{Bass-Alge:68}.
\

$(1)\implies (2)\implies (3)$: Clear.

$(3)\implies (1)$:

Let $I$ be a nonzero ideal of $R$ and let $z$ be a nonzero element in $I$. Let $x+I,y+I$ be two  comaximal elements in $R/I$.
Hence there exist elements $r,s\in R$ such that $1-rx-sy\in I$. By assumption, there exists an element $\lambda\in R$ such that
$(x+\lambda(1-rx),z)R=R$. Thus $x+\lambda sy$ is invertible modulo the ideal $I$. It follows that $R/I$ is of stable range $1$, so $R$ is almost of stable range $1$.
\end{proof}

\begin{rem}\label{almoststable}
 The implication $(3)\implies (1)$ in the previous proposition is clear since if $T$ is a homomorphic image of a ring $R$ with finite stable range, then $\sr(T)\le \sr(R)$ \cite[Proposition 3.2, Ch. V]{Bass-Alge:68}, although this fact was not used explicitly, but rather its proof (in the above proof of the implication $(2)\implies (3)$
 we have $\sr(R/I)\le \sr(R/(z)=1$).
This fact implies that if $R$ is an arbitrary ring of stable range $1$, then $R$ is of almost stable range $1$, thus answering the question
in \cite[Remark 3.3]{McGovern-Bezoringwithalmo:08}. See also \cite[Proposition 3.2]{McGovern-Bezoringwithalmo:08}.
\end{rem}

As we have seen in \S\ref{2}, the stable range $1$ property implies Kaplansky's condition for an arbitrary ring. As for the converse,
even if $R$ is an elementary divisor domain, thus $R$ satisfies Kaplansky's condition, then $R$ does not necessarily has  even almost stable range $1$:

\begin{exmp}\label{warrenquest}
An elementary divisor domain (and so B\'ezout) that does not have almost stable range $1$ (this example answers the question in \cite[Remark 4.7]{McGovern-Bezoringwithalmo:08}).
\end{exmp}

We use a well-known example of a B\'ezout domain, namely, $R=\mathbb Z+X\mathbb Q[X]$ (for a general theorem on pullbacks of B\'ezout domains see
\cite[Theorem 1.9]{Houston+Taylor-Aritproppull:07}). $R$ is an elementary divisor ring by \cite[Theorem 4.61]{Costa+MottETAL-cons:78}. However, $R/X\mathbb Q[X]$ is isomorphic to $\mathbb Z$ and $\sr \mathbb Z=2$ (clearly, there is no integer $m$ such that $2+5m=\pm 1$. Thus $\sr \mathbb Z>1$. As mentioned in the introduction,
the stable range of any B\'ezout domain is $\le 2$, hence $\sr \mathbb Z=2$).
\qed

We conjecture that a B\'ezout domain that is a  pullback of type $\square$ (as defined in \cite{Houston+Taylor-Aritproppull:07})  of elementary divisor domains is again an EDR. In this case the conditions of
\cite[Theorem 1.9]{Houston+Taylor-Aritproppull:07} must be satisfied. If this conjecture proves to be false, thiwill yield a negative answer
to the question in \cite{Henriksen-Someremaelemdivi:55} whether a B\'ezout domain is an EDR.
\def\cprime{$'$} \def\cprime{$'$} \def\cprime{$'$} \def\cprime{$'$}
  \def\cprime{$'$}
\begin{bibdiv}
\begin{biblist}

\bib{Bass-Alge:68}{book}{
      author={Bass, Hyman},
       title={Algebraic {$K$}-theory},
   publisher={W. A. Benjamin, Inc., New York-Amsterdam},
        date={1968},
      review={\MR{0249491 (40 \#2736)}},
}

\bib{Costa+MottETAL-cons:78}{article}{
      author={Costa, Douglas},
      author={Mott, Joe~L.},
      author={Zafrullah, Muhammad},
       title={The construction {$D+XD\sb{s}[X]$}},
        date={1978},
        ISSN={0021-8693},
     journal={J. Algebra},
      volume={53},
      number={2},
       pages={423\ndash 439},
      review={\MR{0506224 (58 \#22046)}},
}

\bib{Fuchs+Salce-Moduovernon-doma:01}{book}{
      author={Fuchs, L{\'a}szl{\'o}},
      author={Salce, Luigi},
       title={Modules over non-{N}oetherian domains},
      series={Mathematical Surveys and Monographs},
   publisher={American Mathematical Society},
     address={Providence, RI},
        date={2001},
      volume={84},
        ISBN={0-8218-1963-1},
      review={\MR{MR1794715 (2001i:13002)}},
}

\bib{Gillman+Henriksen-Ringcontfuncwhic:56}{article}{
      author={Gillman, Leonard},
      author={Henriksen, Melvin},
       title={Rings of continuous functions in which every finitely generated
  ideal is principal},
        date={1956},
        ISSN={0002-9947},
     journal={Trans. Amer. Math. Soc.},
      volume={82},
       pages={366\ndash 391},
      review={\MR{MR0078980 (18,9d)}},
}

\bib{Gillman+Henriksen-Someremaabouelem:56}{article}{
      author={Gillman, Leonard},
      author={Henriksen, Melvin},
       title={Some remarks about elementary divisor rings},
        date={1956},
        ISSN={0002-9947},
     journal={Trans. Amer. Math. Soc.},
      volume={82},
       pages={362\ndash 365},
      review={\MR{MR0078979 (18,9c)}},
}

\bib{Henriksen-Someremaelemdivi:55}{article}{
      author={Henriksen, Melvin},
       title={Some remarks on elementary divisor rings. {II}},
        date={1955--1956},
        ISSN={0026-2285},
     journal={Michigan Math. J.},
      volume={3},
       pages={159\ndash 163},
      review={\MR{MR0092772 (19,1155i)}},
}

\bib{Houston+Taylor-Aritproppull:07}{article}{
      author={Houston, Evan},
      author={Taylor, John},
       title={Arithmetic properties in pullbacks},
        date={2007},
        ISSN={0021-8693},
     journal={J. Algebra},
      volume={310},
      number={1},
       pages={235\ndash 260},
         url={http://dx.doi.org/10.1016/j.jalgebra.2007.01.007},
      review={\MR{2307792 (2008b:13028)}},
}

\bib{Kaplansky-Elemdivimodu:49}{article}{
      author={Kaplansky, Irving},
       title={Elementary divisors and modules},
        date={1949},
        ISSN={0002-9947},
     journal={Trans. Amer. Math. Soc.},
      volume={66},
       pages={464\ndash 491},
      review={\MR{MR0031470 (11,155b)}},
}

\bib{Lam-Serrprobprojmodu:06}{book}{
      author={Lam, T.~Y.},
       title={Serre's problem on projective modules},
      series={Springer Monographs in Mathematics},
   publisher={Springer-Verlag},
     address={Berlin},
        date={2006},
        ISBN={978-3-540-23317-6; 3-540-23317-2},
         url={http://dx.doi.org/10.1007/978-3-540-34575-6},
      review={\MR{2235330 (2007b:13014)}},
}

\bib{McGovern-Bezoringwithalmo:08}{article}{
      author={McGovern, Warren~Wm.},
       title={B\'ezout rings with almost stable range 1},
        date={2008},
        ISSN={0022-4049},
     journal={J. Pure Appl. Algebra},
      volume={212},
      number={2},
       pages={340\ndash 348},
         url={http://dx.doi.org/10.1016/j.jpaa.2007.05.026},
      review={\MR{2357336 (2008h:13033)}},
}

\bib{Menal+Moncasi-reguringwithstab:82}{article}{
      author={Menal, Pere},
      author={Moncasi, Jaume},
       title={On regular rings with stable range {$2$}},
        date={1982},
        ISSN={0022-4049},
     journal={J. Pure Appl. Algebra},
      volume={24},
      number={1},
       pages={25\ndash 40},
         url={http://dx.doi.org/10.1016/0022-4049(82)90056-1},
      review={\MR{647578 (83g:16025)}},
}

\bib{-RedumatroverBezo:03}{article}{
      author={Zabavs{\cprime}ki{\u\i}, B.~V.},
       title={Reduction of matrices over {B}ezout rings of stable rank at most
  2},
        date={2003},
        ISSN={0041-6053},
     journal={Ukra\"\i n. Mat. Zh.},
      volume={55},
      number={4},
       pages={550\ndash 554},
         url={http://dx.doi.org/10.1023/B:UKMA.0000010166.70532.41},
      review={\MR{2072558}},
}

\end{biblist}
\end{bibdiv}

\end{document}